\DeclareSymbolFont{cyrletters}{OT2}{wncyr}{m}{n}
\DeclareMathSymbol{\Sha}{\mathalpha}{cyrletters}{"58}
\theoremstyle{plain}
\newtheorem{theorem}{Theorem}[section]
\newtheorem*{conjecture*}{Conjecture}
\theoremstyle{definition}
\theoremstyle{remark}
\newtheorem*{remark}{Remark}
\numberwithin{equation}{section}
\newcommand{\R}{\mathbb R}
\newcommand{\N}{\mathbb N}
\newcommand{\Z}{\mathbb Z}
\newcommand{\C}{\mathbb C}
\newcommand{\cS}{\mathcal{S}}
\newcommand{\Q}{{\mathbb Q}}
\def\cM{\mathcal{M}}
\def\cH{\mathcal{H}}
\def\H{\mathbb H}
\def\SL{\rm SL}
\def\sgn{\rm sgn}
\def\({\left(}
\def\){\right)}
\newcommand{\ol}[1]{\overline{{#1}}}
\newcommand{\abs}[1]{\left|#1\right|}
\newcommand{\smatr}[4]{\(\begin{smallmatrix} #1 & #2 \\ #3 & #4\end{smallmatrix}\)}
\def\k2{\frac{k}{2}}
\begin{document}

\title[A Maass Lift of $\Theta^3$]{A Maass Lifting of $\Theta^3$ and \\Class Numbers of Real and imaginary Quadratic Fields}

\author{Robert C. Rhoades}
\address{Stanford University\\ Department of Mathematics\\ Bldg 380 \\ Stanford, CA 94305 }
\email{rhoades@math.stanford.edu}

\author{Matthias Waldherr}
\address{Mathematical Institute\\University of
Cologne\\ Weyertal 86-90 \\ 50931 Cologne \\Germany}
\email{mwaldher@math.uni-koeln.de}

\thanks{The first author was partially supported by an NSF Postdoctoral Fellowship and the Chair in Analytic Number Theory at EPFL}
\subjclass[2000] { }

\date{\today}
\thispagestyle{empty} \vspace{.5cm}

\begin{abstract}
We give an explicit construct of a harmonic weak Maass form
$F_{\Theta}$ that is a ``lift''  of $\Theta^3$, where $\Theta$ is
the classical Jacobi theta function. Just as the Fourier
coefficients of $\Theta^3$ are related to class numbers of imaginary
quadratic fields,
 the Fourier coefficients of the ``holomorphic part'' of $F_{\Theta}$ are associated to class numbers  of
real quadratic fields.
\end{abstract}

\maketitle

\section{Introduction and Statement of Results}\label{sec:Introduction}
Ramanujan's mock theta functions proved mysterious for
more than 80 years.  They are $q$-hypergeometric series such as
\begin{equation}\label{eqn:f}
f(\tau):=1+\sum_{n=1}^{\infty}\frac{q^{n^2}}{(1+q)^2(1+q^2)^2\cdots (1+q^n)^2},
\end{equation}
with $q:=e^{2 \pi i \tau}$ ($ \tau \in \H$),
 that
have ``nearly modular'' properties, but fail to be fully modular.
Modularity, explained by Zwegers in his Ph.D. thesis \cite{zwegers1, Zw}, is obtained if one adds  to the mock theta function  a certain non-holomorphic integral
$$P_g(\tau):=
\int_{-\overline{\tau}}^{i\infty}
\frac{g(z)}{
\sqrt{-i(\tau+z)}} \ dz.
$$
Here $g$ is a weight $3/2$ unary theta function, which in the case of the mock theta function $f$ is given  by
$$
g(\tau):=\frac{1}{6} \sum_{n \equiv 1 \pmod{6}} n
q^{\frac{n^2}{24}}
$$%
(see \cite{zwegers1}).
The resulting function, $\cM_f(\tau) := q^{-\frac{1}{24}}f(\tau)+2i\sqrt{3}P_g(\tau)$, is a harmonic weak Maass form
of weight $1/2$ (see Section \ref{sec:Poincare}
for the definition). Given $\cM_f$ we may recover $g$ by applying a differential operator $\xi_{\frac12}$
(also see Section \ref{sec:Poincare}).
Following Zagier \cite{Za3}, we call the image of $\cM_f$ under $\xi_{\frac12}$
the \textit{shadow} of the mock theta function $f$. In this case
$\xi_{\frac12} \cM_f$ is a unary theta function. In general, the shadow is a modular form. In view of this we define a
\emph{mock modular form} to be the holomorphic part (see Section \ref{sec:Poincare})
of a harmonic weak Maass form.


Conversely, one may begin with a non-holomorphic integral of  a weight $3/2$
unary theta function and produce a mock theta function.
The resulting mock theta functions may be written as Lerch sums \cite{Zw} and in many cases may also be written
as $q$-series similar to that in \eqref{eqn:f}.
A similar construction exists for the analogous non-holomorphic integral of a
weight $1/2$ unary theta function \cite{BFO, BL}.
However, no such construction exist for nonunary theta functions.

This raises the question whether a nonunary theta function may appear as the shadow of a mock modular form.  
Let $F$ be a hamonic weak Maass form of weight $1/2$.
In this paper, we give an explicit construction of such an example, namely $\Theta^3$, where
$$\Theta(\tau)  := \sum_{n\in \Z} q^{n^2}.$$
By work of Bruinier and Funke \cite{BF} we know the existence of
such a mock modular form,
but not its explicit form.  In fact, \cite{BF} implies the existence of a mock modular form with shadow
equal to any holomorphic modular form of positive weight.

In recent work  Duke, Imamo\={g}lu, and T\'{o}th \cite{DIT}
construct mock modular forms that have certain weight $3/2$ weakly holomorphic forms as
their shadows. Their work and work of Knopp \cite{Kn1}
suggests that such a mock modular form can be
constructed from a weight $1/2$ non-holomorphic Poincar\'e series.
Earlier work of Kubota
\cite{K} also gives insight into the construction of such a form.
Along these lines, we give an explicit construction of a harmonic
weak Maass form with shadow equal to $\Theta^3$ and prove that the
coefficients of the associated harmonic weak Maass form are related
to class numbers of real and imaginary quadratic fields.

The theory of mock modular forms has exploded in recent years and
with its development have come many questions concerning the
arithmetic nature of the Fourier coefficients of mock modular forms
(see, for example, \cite{BrO, bruinierrhoades, DIT, ZwIndef}).
For instance, one might ask: When are the coefficients rational or
algebraic? It is believed that the Fourier coefficients of a mock
modular form are rational when the shadow is a modular form with
complex multiplication. For instance, as a result of the
constructions for unary theta functions it is clear that the
coefficients of the associated mock modular form are integral (see, for example,
\cite{ZwGrid}).

This paper demonstrates that the coefficients of lifts of nonunary
theta function are essentially given by special values of Dirichlet
$L$-functions associated to quadratic fields. In our specific case
the coefficients of the mock modular form are related to the
logarithms of fundamental units of quadratic fields.
Despite not being integral, the
coefficients of the mock modular forms associated with theta
functions still carry arithmetic data. For more examples of mock
modular forms whose coefficients are not rational but still encode
arithmetic information see, for instance, \cite{BrO, bruinierrhoades,
DIT}.

Returning to our example, let us start by recalling what is known
about the relationship between $\Theta^3$ and $L$-functions of
quadratic fields. Recall that the Fourier coefficients of
$\Theta^3$, which we denote by $r(n)$, themselves encode class
numbers. To be more precise,  for $N>0$ and $N \equiv 0,3 \pmod 4$,
we write $H(-N)$ for the Hurwitz class number, i.e., the number of
equivalence classes of quadratic forms of discriminant $-N$, where
each class $C$ is counted with multiplicity $1/\text{Aut}(C)$. Then
  $$
  r(n) =
   \begin{cases}12H(-4n) & \text{ if } n\equiv 1, 2\pmod{4},
    \\ r(n/4) & \text{ if } n\equiv 0 \pmod{4},
    \\ 24 H(-n) & \text{ if } n\equiv 3 \pmod{8},
    \\ 0 & \text{ if } n\equiv 7 \pmod{8}.
    \end{cases}
    $$
The Hurwitz class number $H(-N)$ itself is related to the class
number $h(-N)$ of the ring of integers of  $\Q(\sqrt{-N})$. To state
this relationship accurately, write $-N=- \Delta_Nf^2$, where $-
\Delta_N$ is a negative fundamental discriminant. Then we have that
\cite{Co}
$$
H(-N)= \frac{2 h(-\Delta_N)}{\omega_N}
T_1^{\psi_{-N}}(f),
$$
where $\omega_N$ denotes the number of units in $\Q(\sqrt{-N})$ and for $n \not=0$, $\psi_{n}(\cdot):=\left(\frac{D}{\cdot} \right)$
with $D$ the discriminant of $\Q(\sqrt{n})$. Moreover,
 for a character $\chi$, $T_s^\chi$ is the multiplicative function defined by
\begin{align*}
T^{\chi}_s(w) :=
\sum_{a|w} \mu(a)\chi(a)a^{s-1}\sigma_{2s-1}\left(\frac{w}{a}\right),
\end{align*}
where $\mu$ is the M\"obius function and $\sigma_{\ell}$ denotes the  $\ell$th divisor sum.
We also write $T_1:=T_1^{\psi_1}$.

Finally the relationship between $r(n)$ and special values of
$L$-functions of quadratic fields is given by Dirichlet's class
number formula, which for $\psi_n$ with $n<0$ states
$$ L(1, \psi_n) = \frac{2\pi}{w_n\sqrt{-n}} h(n).
$$
Here, $L(s,\chi)$ is the Dirichlet $L$-function associated to
character $\chi$.

As a conclusion we may note that the Fourier coefficients $r(n)$ are
related to the value of Dirichlet $L$-functions associated to
imaginary quadratic fields at $s=1$. In our work we show that the
complementary values for real quadratic fields appear as
coefficients in the mock modular form having shadow $\Theta^3$. To
state our results accurately define $F_{\Theta}:\H \to \C$ by the
following Fourier expansion
\begin{equation*}
F_{\Theta}(\tau):= \sum_{n = 0}^\infty c^+(n)q^n+
2y^\frac{1}{2}+ \sum_{n =1 }^{\infty} c^-(n)
\Gamma(\tfrac{1}{2};4 \pi n y)q^{-n},
\end{equation*}
where $\Gamma(a;x):=\int_{x}^{\infty} e^{-t} t^{a-1} dt$ denotes the incomplete gamma-function,
and the coefficients $c^+(n)$ and $c^-(n)$ are given by
\begin{align*}
 c^+(n):=& \pi  e^{-\frac{\pi i}{4}}
 \overline{Z_{-n}},\\
 c^-(n):=&\sqrt{\pi}   e^{-\frac{\pi i}{4}}
 \overline{Z_{n}}.
 \end{align*}
\begin{remark}
In Section \ref{sec:Poincare} it is shown that
$$c^-(n) = -\frac{1}{2\sqrt{\pi n}} r(n).$$
\end{remark}
 To define the values   $Z_n$, we write  $n \neq 0$ as $n=f^2d$ with $d$ squarefree
and $f=2^qw$ with $w$ odd, and let
 $$
c_{n}:=
\left\{
\begin{array}{ll}
2-\psi_{-n}(2) & \hbox{if } n \equiv 1,2 \pmod{4},\\
   2^{-Q} \left(1-\psi_{-n}(2) \right) & \hbox{otherwise},
\end{array}%
\right.
$$
where
$$
Q:=
\begin{cases}
q&\text{if } d \equiv 3 \pmod 4,\\
q-1&\text{if } d \equiv 1,2 \pmod 4.
\end{cases}
$$
 Then we define
 $$
 Z_n:= \left\{
 \begin{array}{ll}
 e^{\frac{3\pi i }{4}} \frac{6}{\pi^2} \log (2)&\text{if } n=0, \\
 e^{\frac{3\pi i }{4}} \frac{6}{\pi^2} \log (2) \frac{T_1(w)}{w}&\text{if } n \text{ is a square},\\
 e^{\frac{3\pi i}{4}}\frac{6}{\pi^2}L(1,\psi_{-n}) \frac{T^{\psi_{-n}}_1(w)}{w} \cdot c_{n}&\text{otherwise}.
 \end{array}
 \right.
 $$
Let $$F_{\Theta}^+(\tau) := \sum_{n=0}^\infty c^+(n)q^n.$$
Dirichlet's class number formula for real quadratic fields, that is for $n>0$, states
 $$
 L(1, \psi_n) = \frac{\log(\epsilon_n)}{\sqrt{n}} h(n),
$$
where $\epsilon_n$ is the fundamental unit in the field $\Q(\sqrt{n})$.  Therefore,
the coefficients $F_{\Theta}^+$ may be written as simple expressions in terms of class numbers.

\begin{theorem}\label{thm:cubelift}
The function
$F_{\Theta}^+$ is a mock modular form of weight
$\frac{1}{2}$ with respect to $\Gamma_0(4)$ with shadow $\Theta^3$.
Furthermore, the harmonic weak Maass form $F_{\Theta}$ is a Hecke eigenform.
\end{theorem}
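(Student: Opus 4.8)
The plan is to realize $F_\Theta$ as (a special value of) a non-holomorphic Poincar\'e series of weight $\frac12$ on $\Gamma_0(4)$, in the spirit of \cite{DIT, Kn1, K}, and then to extract both the defining properties and the arithmetic of the Fourier coefficients from that construction. Since the advertised coefficients are essentially bare special values $L(1,\psi_{\pm n})$ decorated by local factors, the relevant series should in fact be of Eisenstein type (the $m=0$ seed), whose Fourier expansion closes up into $L$-values; this already explains the shape of the $Z_n$. Concretely, I would reduce Theorem \ref{thm:cubelift} to three claims: (i) the series is a harmonic weak Maass form of weight $\frac12$ on $\Gamma_0(4)$ with $\xi_{\frac12}$-image $\Theta^3$; (ii) its Fourier coefficients coincide with the prescribed $c^+(n)$ and $c^-(n)$; and (iii) it is a Hecke eigenform.

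For (i), I would write down the Poincar\'e series attached to a Whittaker-function seed that is annihilated by the weight $\frac12$ hyperbolic Laplacian at the distinguished spectral value, summed over $\Gamma_\infty\backslash\Gamma_0(4)$ against the weight $\frac12$ automorphy factor carrying the theta multiplier. Averaging over the group makes modularity on $\Gamma_0(4)$ automatic, while the choice of seed forces harmonicity, so the specialization is a harmonic weak Maass form of weight $\frac12$. I would then compute $\xi_{\frac12}$ directly: it annihilates the holomorphic part $F_\Theta^+$ and sends each term $\Gamma(\tfrac12;4\pi n y)\,q^{-n}$ to a constant multiple of $q^{n}$ (and the $2y^{\frac12}$ term to the constant), so the shadow is pinned down by the $c^-(n)$. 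The Remark's identity $c^-(n)=-\tfrac{1}{2\sqrt{\pi n}}\,r(n)$ is then exactly what identifies this image with $\Theta^3=\sum r(n)q^n$, a weight $\frac32$ form as required.

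The heart of the argument is claim (ii), the explicit Fourier expansion. Unfolding produces, for each frequency, a series of half-integral-weight (Sali\'e-type) Kloosterman sums weighted by Bessel functions, and separating the two asymptotic pieces yields the holomorphic coefficients $c^+(n)$ and the non-holomorphic coefficients $c^-(n)$. \emph{The main obstacle is evaluating these Kloosterman-sum series in closed form:} after the Bessel integrals are carried out one must show that the resulting Dirichlet series collapses to $L(1,\psi_{\pm n})$ times precisely the local correction factors $c_n$ and $T_1^{\psi_{\pm n}}(w)/w$. I would attack this by evaluating the Sali\'e sums through Gauss-sum identities, recognizing the resulting series as $L(1,\psi_{\pm n})$ multiplied by finitely many Euler factors at the primes dividing the conductor, and matching those Euler factors against the definitions of $c_n$ and $Q$. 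The square-discriminant case and the case $n=0$ would be handled separately, with the $\log 2$ terms emerging from the degenerate factor at the ramified prime $2$ in the multiplier system; this case analysis is exactly what the piecewise definition of $Z_n$ records.

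Finally, for claim (iii) I would verify the half-integral-weight Hecke relations for the operators $T_{p^2}$ directly on the coefficients. The multiplicativity of $T_s^{\chi}$, together with the standard behavior of $L(1,\psi_{-n})$ under replacing the discriminant by $p^2$ times it, furnishes the required three-term recursion among the $c^{\pm}(n)$; the factors $c_n$ and $T_1^{\psi}(w)/w$ appear to have been engineered to encode exactly these relations, so I expect this route to be clean. As an alternative one can use that $T_{p^2}$ commutes with $\xi_{\frac12}$ (up to the usual power of $p$) and that $\Theta^3$ is itself a Hecke eigenform to transport the eigenvalue through the shadow; the subtlety there is that $F_\Theta|T_{p^2}-\lambda\, F_\Theta$ is only a priori weakly holomorphic, so one must still rule out a nonzero weakly holomorphic discrepancy, which is why the coefficient-level verification is the safer primary argument.
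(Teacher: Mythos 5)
Your overall strategy coincides with the paper's: realize $F_\Theta$ as an Eisenstein-type Poincar\'e series with the cubed theta multiplier, unfold to express the Fourier coefficients as Kloosterman-type Dirichlet series, evaluate those series in closed form against $L(1,\psi_{\pm n})$ with local correction factors (this is exactly the paper's Section \ref{sec:Fourier}, which leans on Zagier's evaluations of the sums $\gamma_c(n)$), identify the shadow with $\Theta^3$ through the classical series representation $r(n)=2e^{-3\pi i/4}\pi n^{1/2}Z_n(1)$, and then treat the Hecke property. However, there is a genuine gap in your step (i): at weight $\tfrac12$ the Poincar\'e series does \emph{not} converge at the harmonic spectral point, so ``averaging over the group makes modularity automatic, while the choice of seed forces harmonicity'' is not available. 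The paper instead introduces a spectral parameter $s$ with seed proportional to $y^{s-\frac14}$; the series converges only in a region excluding the harmonic point $s=\tfrac34$, and there it is an eigenfunction with eigenvalue $\left(s-\tfrac14\right)\left(\tfrac34-s\right)\neq 0$, hence not harmonic. One computes the Fourier expansion in that region, whose coefficients involve the Kloosterman zeta functions $Z_{\pm n}\left(2s-\tfrac12\right)$, and \emph{defines} $F_\Theta$ by analytic continuation to $s=\tfrac34$. This continuation is a nontrivial input in its own right: for $n\neq 0$ it is quoted from Knopp, and for $n=0$ the paper proves it (Theorem \ref{thm_E0}) by evaluating $Z_0(s)$ exactly. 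Relatedly, the $\log 2$ coefficients at $n=0$ and at square $n$ do not come from a local Euler factor at the ramified prime alone, as your sketch suggests: they arise in the limit $s\to 1$, where the pole of $\zeta(s)$ (resp.\ $\zeta(2s-1)$) collides with a vanishing elementary factor, producing a derivative equal to $\log 2$. A proof that takes the harmonic specialization as given skips the step on which the whole construction rests.

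On the Hecke statement the routes genuinely differ, and here the comparison favors the paper. Your primary plan, a coefficient-level verification of the $T(p^2)$ relations from the multiplicativity of $T_s^\chi$, is viable in principle but must be carried out separately for non-square $n$, square $n$, and $n=0$ (including the prime-$2$ factors $c_n$), and also on the non-holomorphic coefficients. The paper uses exactly what you call the alternative: since $\Theta^3$ is an eigenform with $T(p^2)$-eigenvalue $1+p$, the function $F_\Theta|T(p^2)-\left(1+\tfrac1p\right)F_\Theta$ has vanishing shadow and is therefore weakly holomorphic; the discrepancy you worry about is then eliminated by noting that its principal part is constant, so it is a holomorphic weight $\tfrac12$ form on $\Gamma_0(4)$, which by the Serre--Stark basis theorem is a multiple of $\Theta$, and an explicit computation of the constant term shows that multiple is $0$. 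So the route you deemed unsafe is in fact the clean one, once the finite-dimensionality input from Serre--Stark is invoked.
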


\begin{remark}
Nonunary theta functions are closely related to Eisenstein series of half integral weight.
Such series typically have Whittaker-Fourier coefficients equal to a quotient of Hecke $L$-functions, often associated to
imaginary quadratic fields. In general, a
mock modular form with shadow equal to a nonunary theta function will have Fourier coefficients of the same shape,
often associated to real quadratic fields.

See Section \ref{sec:DIT} for further discussion of the work of
Duke, Imamo\=glu, T\'oth and other works dealing with the arithmetic nature of the Fourier
coefficients of harmonic weak Maass forms.
\end{remark}

In Section \ref{sec:Poincare} we construct a Maass-Poincar\'e series of weight $1/2$ related to $\Theta^3$.
In Section \ref{sec:Fourier} we compute its Fourier expansion, resulting in the relations to $L$-series and proving Theorem  \ref{thm:cubelift}.

\section*{Acknowledgements}
The authors thank Kathrin Bringmann for valuable discussions and guidance.
The authors also thank the referee for comments that helped polish the exposition of the paper.

\section{A Maass-Poincar\'e series representation for $F_{\Theta}$}\label{sec:Poincare}

In this section we write  $F_{\Theta}$ as a Poincar\'e series.
We begin by recalling the definition of a harmonic weak Maass form.
With $\Gamma$ a finite index subgroup of $\SL_2(\Z)$ and
$\nu:\Gamma \to \C$ a multiplier,
a \textit{harmonic weak Maass form} of weight $k$ with respect to $\Gamma$ is a smooth
function $F:\H \to \C$ with the following properties
\begin{enumerate}
\item For all $A = \smatr{a}{b}{c}{d} \in \Gamma$ we have $F(A \tau)= \nu(A)(c \tau+d)^{k} F(\tau)$.
\item We have that $\Delta_k F =0$, where
 for  $z=x+iy$ with $x, y\in \R$,  the weight $k$ \textit{hyperbolic
Laplacian} is given by
\begin{equation}\label{laplacian}
\Delta_k := -y^2\left( \frac{\partial^2}{\partial x^2} +
\frac{\partial^2}{\partial y^2}\right) + iky\left(
\frac{\partial}{\partial x}+i \frac{\partial}{\partial y}\right).
\end{equation}
\item $F$ has at most linear exponential growth toward each cusp of $\Gamma\backslash \H$.
\end{enumerate}
The \emph{shadow} of a harmonic weak Maass form $F$ is a weakly holomorphic modular form of weight
$2-k$ equal to $\xi_{k}(F)$ where
$\xi_{k}:=2i y^{k}\overline{\tfrac{\partial}{\partial \bar \tau} }$.

Every weight $k$ harmonic weak Maass form $F(z)$
has a Fourier expansion of the form
\begin{equation}\label{fourier}
F(\tau)=\sum_{n\gg -\infty} c_F^+(n) q^n + Cy^{1-k}+ \sum_{n\ll +\infty, n\ne 0} c_F^-(n)
\Gamma(1-k, -4\pi ny) q^n,
\end{equation}
As \eqref{fourier} reveals, $F(z)$
naturally decomposes into two summands
\begin{eqnarray}
\label{fourierhh}
&F^{+}(\tau):=\sum_{n\gg -\infty} c_F^+(n) q^n,\\
\label{fouriernh}
&F^{-}(\tau):=Cy^{1-k} + \sum_{\substack{n\ll +\infty, n\ne 0 }} c_F^-(n)\Gamma(1-k, -4\pi ny)q^n.
\end{eqnarray}
A direct computation shows that
$\xi_{k}(F)$
is given simply in terms of $F^{-}(z)$, the \emph{non-holomorphic part}
of $F$.
The \emph{holomorphic part} of $F$ is $F^{+}(z)$.

Next we recall a series representation for $r(n)$. For this, we  define for $\left(\begin{smallmatrix}a&b \\ c&d \end{smallmatrix} \right) \in \Gamma_{\Theta} := \left< \smatr{1}{2}{0}{1}, \smatr{0}{1}{-1}{0}\right>$ the theta multiplier \cite{Kn2}
\[
 \nu_\Theta\left(%
\(
\begin{smallmatrix}
  a & b \\
  c & d \\
\end{smallmatrix}%
\)
\right) :=
\left\{%
\begin{array}{ll}
   \left( \frac{d}{c} \right)^* e^{-\frac{\pi i c}{4}} & \hbox{if } b \equiv c \equiv 1 \pmod{2}, a \equiv d \equiv 0 \pmod{2},\\[1ex]
   \left( \frac{c}{d} \right)_* e^{\frac{\pi i (d-1)}{4}} & \hbox{if } b \equiv c \equiv 0 \pmod{2},   a \equiv d \equiv 1 \pmod{2}.\\
\end{array}%
\right.   \]
Here, for $c \not=0$, we define, using the usual Jacobi symbol,
\begin{eqnarray*}
\left( \frac{c}{d} \right)^*&:=& \left( \frac{c}{|d|} \right),\\
 \left( \frac{c}{d} \right)_*&:=& \left( \frac{c}{|d|} \right)(-1)^{\frac{\sgn (c) -1}{2}\frac{\sgn (d) -1}{2}}.
\end{eqnarray*}
Moreover, we set
$$
\left( \frac{0}{\pm 1} \right)^* =  \left( \frac{0}{1} \right)_*= -\left( \frac{0}{-1} \right)_*=1.
$$
\begin{remark}
$\nu_\Theta$  is the multiplier for $\Theta(\tau/2)$, a form on $\Gamma_\Theta$, rather than on $\Gamma_0(4)$.
\end{remark}
For $c \in \N$, define the sum of Kloosterman type
\begin{equation} \label{Klooster}
 \mathcal{S}(n;c) :=
 \sum_{d \pmod{2c}}
\overline{\lambda(d,c)}^3 e^\frac{\pi i d n}{c}
\end{equation}
with
$$
\lambda(d,c):=
 \left\{
\begin{array}{ll}
   e^{-\frac{\pi i c}{4}}\left(\frac{d}{c}\right) & \hbox{if } c \hbox { is odd, } d \hbox { is even,} \\
   e^\frac{\pi i (d-1)}{4}\left(\frac{c}{d}\right)  & \hbox{if } c \hbox { is even, } d \hbox { is odd,} \\
   0 & \hbox{otherwise.} \\
\end{array}%
\right.$$
We require the  Kloosterman zeta-function, which is defined for Re$(s)$ sufficiently large,
$$
Z_n(s):=\sum_{c=1}^\infty
\frac{\mathcal{S}(n;c)}{c^{s+\frac{1}{2}}}.
$$
It is known (see for example \cite{Kn1}) that for $n\not=0$, $Z_n(s)$ has an analytic continuation to $s=1$. The analytic continuation of $Z_0(s)$ to $s=1$ is shown in Theorem \ref{thm_E0}.
Using the above notation, we can state the following series expansion for $r(n)$ (a proof may for example be found in \cite{Ba})
\begin{equation} \label{rn}
r(n)
= 2 e^{ -\frac{3 \pi i }{4}}  \pi n^{\frac12 }
Z_n(1).
\end{equation}

To construct the Poincar\'e series required, we let
$\psi(\tau;s):= 2^{-s+  \frac14} y^{s-\frac{1}{4}}$ and    $\Gamma_\infty(2) := \left\{ \pm \smatr{1}{2n}{0}{1};n \in \Z\right\}$.
We formally define the Poincar\'e series
\begin{equation*}
F_{\Theta}(\tau;s):
= \sum_{A \in \Gamma_\infty(2) \setminus \Gamma_\Theta} \psi(A\tau;s)\nu^{3}_\Theta(A) (c\tau+d)^{-\frac{1}{2}}.
\end{equation*}
One can show that for $s>3/4$  the function $F_{\Theta}(\tau;s)$ is absolutely convergent and  transforms like an automorphic form of weight $\frac12$ with multiplier
$\nu^{-3}_\Theta$ and eigenvalue $\left( s-\frac{1}{4} \right)\left( \frac{3}{4}-s \right)$ under the weight $\frac12$ hyperbolic Laplacian
$
\Delta_{\frac12}
.
$
We are  interested in the case $s=\frac34$ which will be obtained by continuing the Fourier expansion
of
$F_{\Theta}(\tau;s)$ analytically.

To state the Fourier expansion of $F_{\Theta}(\tau;s)$, we define
\[
\mathcal{W}_n(y;s):=
\left\{
\begin{array}{ll}
  |n|^{-\frac{1}{2}} \Gamma\left(s+\frac{{\sgn}( n)}{4}\right)^{-1} \left(4 \pi |n|y\right)^{-\frac{1}{4}}
  W_{\frac{1}{4}{\sgn}(n), s-\frac{1}{2}}
  \left(4 \pi \abs{n}y\right) & \hbox{if } n \neq 0, \\
  \frac{2^{2s-\frac{1}{2}}}{(2s-1)\Gamma\left(2s-\frac{1}{2}\right)}y^{\frac{3}{4}-s} & \hbox{if } n=0, \\
\end{array}
\right.
\]
where $W_{\nu,\mu}$ is the usual $W$-Whittaker function.

\begin{theorem}\label{thm_fourierexpansion}
We have the following Fourier expansion
\[
F_{\Theta} (\tau;s) = \left(\frac{y}{2} \right)^{s-\frac{1}{4}} + \sum_{n \in \Z} a_n(s) \mathcal{W}_n\left(\tfrac{y}{2};s\right) e^{ \pi i nx},
\]
where
\[
a_0(s)=  2^{1-4s} e^{-\frac{\pi i}{4}}
\pi^{\frac{1}{2}} \Gamma(2s)
\overline{  Z_0\left(2 \bar{s}-\frac12 \right)  }
\]
and for  $n \neq 0$
\[
a_n(s) = 2^{\frac12-2s}\pi^{s+\frac{1}{4}} |n|^{s-\frac{1}{4}}
e^{-\frac{\pi i}{4}} \overline{  Z_{-n}\left(2 \bar{s}-\frac12 \right)  }
.
\]
Moreover, the series $F_{\Theta} (\tau;s)$ has an analytic continuation to $s= \frac34$ and we have the expansion
\begin{align*}
F_{\Theta} (\tau):=& 2 F_{\Theta}\left(2\tau;\frac{3}{4}\right) \\
=&
 2 y^\frac{1}{2}
 +\frac12  \pi e^{-\frac{\pi i }{4}} \ol{Z_0(1)}
+
 e^{-\frac{\pi i}{4}} \pi  \sum_{n = 1}^\infty
 \overline{Z_{-n}(1)} q^n
 +  e^{-\frac{\pi i}{4}}\sqrt{\pi} \sum_{n
=1 }^{\infty}
 \overline{Z_{n} (1)} \Gamma\left(\tfrac{1}{2};4 \pi n y\right)q^{-n}.
\end{align*}
The function $F_{\Theta}$ is a harmonic weak Maass form of weight $\frac12$ for $\Gamma_0(4)$   satisfying
\begin{equation} \label{shadow}
\xi_\frac{1}{2}(F_{\Theta})= \Theta^3.
\end{equation}

\end{theorem}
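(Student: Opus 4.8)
The plan is to compute the Fourier expansion of $F_{\Theta}(\tau;s)$ directly by unfolding, apply Poisson summation to identify the Whittaker coefficients, recognize the Kloosterman zeta functions, and only then pass to the harmonic point $s=\tfrac34$ by analytic continuation. First I would unfold the coset sum. Since the seed $\psi(\tau;s)=2^{-s+\frac14}y^{s-\frac14}$ depends only on $y$ and is invariant under $\Gamma_\infty(2)$, the identity coset contributes precisely $(y/2)^{s-\frac14}$. For the remaining cosets I organize the sum by the lower-left entry $c\in\N$; using $\im{A\tau}=y/\abs{c\tau+d}^2$, a representative contributes
$$2^{-s+\frac14}\,\frac{y^{s-\frac14}}{\abs{c\tau+d}^{2s-\frac12}}\,\nu_\Theta^3(A)\,(c\tau+d)^{-\frac12}.$$
Writing $c\tau+d=c(u+iy)$ with $u=x+d/c$ reveals the total $c$-dependence to be $c^{-2s}$, which will match the series $Z_n(2s-\tfrac12)=\sum_c \mathcal{S}(n;c)c^{-2s}$.

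Second, I would carry out Poisson summation in $x$ over the integer translations $\tau\mapsto\tau+2\ell$. Because the period is $2$, the Fourier modes are $e^{\pi i n x}$, and each coefficient is the classical integral $\int_{\R}\abs{u+iy}^{-(2s-\frac12)}(u+iy)^{-\frac12}y^{s-\frac14}e^{-\pi i n u}\,du$, which evaluates to a $W$-Whittaker function; collecting the elementary prefactors gives exactly $\mathcal{W}_n(y/2;s)$. The residual phases $e^{-\pi i n d/c}$ combine with $\nu_\Theta^3$ over $d\bmod 2c$ into the Kloosterman sums $\mathcal{S}(n;c)$ of \eqref{Klooster}, so that summing over $c$ yields the coefficients $\overline{Z_{-n}(2\bar s-\tfrac12)}$; the complex conjugate and the index $-n$ are bookkeeping from matching $\nu_\Theta^3$ against the definition of $\mathcal{S}$ via $\overline{\lambda}^3$. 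The $n=0$ coefficient is treated separately and yields $a_0(s)$ in terms of $\overline{Z_0(2\bar s-\tfrac12)}$. This establishes the first displayed expansion.

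Third, and this is the crux, I would analytically continue to $s=\tfrac34$. For $n\neq0$ the continuation of $Z_n(s)$ to $s=1$ is available from Knopp \cite{Kn1}, while the continuation of $Z_0(s)$ is exactly the content of Theorem \ref{thm_E0}; at $s=\tfrac34$ the argument $2s-\tfrac12$ specializes to $1$, matching $Z_n(1)$. At this value the Whittaker functions degenerate: with $\mu=s-\tfrac12=\tfrac14$, the identity $U(\tfrac12,\tfrac32,z)=z^{-\frac12}$ gives $W_{\frac14,\frac14}(z)=e^{-z/2}z^{\frac14}$, producing holomorphic terms, whereas $W_{-\frac14,\frac14}(z)=e^{z/2}z^{\frac14}\Gamma(\tfrac12,z)$ produces the non-holomorphic terms $\Gamma(\tfrac12;4\pi ny)q^{-n}$. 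Performing the rescaling $F_{\Theta}(\tau):=2F_{\Theta}(2\tau;\tfrac34)$ turns the $e^{\pi i n x}$ modes into $q^{\pm n}$ and yields the claimed expansion with $c^+$ and $c^-$. The main obstacle is precisely this step: one must ensure the continued series is genuinely holomorphic at $s=\tfrac34$ and that in the $n=0$ term the gamma factors carried by both $a_0(s)$ and $\mathcal{W}_0(y;s)$ combine to a finite constant rather than a pole.

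Finally I would verify that $F_{\Theta}$ is a harmonic weak Maass form with the asserted shadow. Transformation of weight $\tfrac12$ under $\Gamma_\Theta$ with multiplier $\nu_\Theta^{-3}$ is built into the Poincar\'e construction, and conjugation by $\smatr{2}{0}{0}{1}$ (the effect of $\tau\mapsto2\tau$) carries $\Gamma_\Theta$ to $\Gamma_0(4)$ and $\nu_\Theta^{-3}$ to the standard weight-$\tfrac12$ multiplier there, while linear exponential growth at the cusps is immediate from the unfolded series. Harmonicity follows since $F_{\Theta}(\tau;s)$ is an eigenfunction of $\Delta_{\frac12}$ with eigenvalue $\left(s-\tfrac14\right)\left(\tfrac34-s\right)$, which vanishes at $s=\tfrac34$, an identity preserved under analytic continuation. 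For the shadow \eqref{shadow} I would apply $\xi_{\frac12}=2iy^{\frac12}\,\overline{\partial/\partial\bar\tau}$ to the non-holomorphic part: it annihilates $F_{\Theta}^+$, sends $2y^{\frac12}$ to the constant $1=r(0)$, and sends each $c^-(n)\Gamma(\tfrac12;4\pi ny)q^{-n}$ to a multiple of $q^n$; using the relation $c^-(n)=-\tfrac{1}{2\sqrt{\pi n}}\,r(n)$ together with the series representation \eqref{rn} for $r(n)$, the output is exactly $\sum_{n\ge0}r(n)q^n=\Theta^3$.
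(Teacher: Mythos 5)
Your proposal is correct and takes essentially the same approach as the paper's proof: Fourier expansion of the Poincar\'e series by unfolding (which the paper does not write out, citing Fay instead), analytic continuation to $s=\tfrac34$ via the continuation of $Z_n\left(2s-\tfrac12\right)$ (Knopp for $n\neq0$, Theorem \ref{thm_E0} for $n=0$), degeneration of the Whittaker functions at the harmonic point, the passage $\tau\mapsto 2\tau$ to land on $\Gamma_0(4)$, and the shadow computation from the anti-linearity of $\xi_{\frac12}$ together with \eqref{rn}. The only difference is one of detail: you supply the standard unfolding/Poisson-summation calculation and the explicit Whittaker identities that the paper outsources to its references.
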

\begin{proof}
Since the proof of the Fourier expansion is quite standard   (see \cite{Fa} for a similar calculation), we do not give it here.
The analytic continuation of $F_{\Theta} (\tau;s)$ to $s=\frac34$ follows directly from the analytic continuation of $Z_n(2s-\frac12)$.
The expansion of $F_{\Theta}$ is then obtained by setting $s=\frac34$ and using special values of Whittaker functions (see \cite{DIT}, for example).
Moreover, it is well known that if $f(\tau)$ transforms like a modular form of weight $\frac12$ for $\Gamma_{\Theta}$ with multiplier $\nu_{\Theta}^{-3}$, then $f(2 \tau)$ transforms like a modular form
on $\Gamma_0(4)$.

Finally (\ref{shadow}) follows by a direct calculation, using the explicit form of $r(n)$ stated in (\ref{rn}).
More precisely, we have
$\xi_{\frac{1}{2}}\left(2y^{\frac{1}{2}}\right)=1$
and
$\xi_{\frac{1}{2}}\left( \Gamma\left(\frac12;y \right)  \right)=e^{-y}$.
Using the anti-linearity of $\xi_{\frac{1}{2}}$ then easily gives the claim.%
 \end{proof}

 We conclude this section by showing that  $F_{\Theta}$ is a Hecke eigenform.
  Since  $\Theta^3$ is a Hecke eigenform  with
 eigenvalue $1+p$ under the Hecke operator $T(p^2)$ (see \cite{Sh}), one may easily conclude that
 $$
F_{\Theta}|T(p^2) - \left( 1+ \frac{1}{p}\right) F_{\Theta}
$$
is a weakly holomorphic modular form of weight $\frac12$ on $\Gamma_0(4)$.
Moreover, its principal part is constant so it is a holomorphic modular form.
By the Serre-Stark basis theorem
 the space of holomorphic modular forms of on $\Gamma_0(4)$ is known to be one-dimensional
and spanned by $\Theta$.
Computing the action of the Hecke
operators explicitly, one sees that its constant term is $0$, thus the form must be 0.

\section{Relation to L-series}\label{sec:Fourier}
 In this section, we will show that $Z_n(1)=Z_n$, where $Z_n$ was defined in the introduction.
 For this, we will distinguish the cases $n \not=0$ and $n=0$.

\subsection{Computation of $Z_n(s)$ for $n\ne 0$}

\begin{theorem}\label{thm:EnEvenOdd}
Let $n = f^2 d\ne 0$ be an integer with $d$ square-free  and $f= 2^qw$ with $w$ odd.
Then we have that
 $$
 Z_n(s) = Z_n^{odd}(s) R_n(s)
 $$
with
$$
Z_n^{odd}(s) := e^{\frac{3\pi i}{4}} \frac{L(s, \psi_{-n})}{\zeta(2s)} w^{1-2s} T_s^{\psi_{-n}}(w) \frac{1- \psi_{-n}(2)2^{-s}}{1-2^{-2s}}
$$
and
$$
R_n(s):= 1+2^{-s}-2^{1-s}R_n^*(s).
$$
Here
$$
R_n^*(s):=
\begin{cases}
0 & \quad\text{if } n\equiv 1,2\pmod{4},\\
\frac{1-2^{-2s}}{1-\psi_{-n}(2)2^{-s}}2^{Q(1-2s)}T_s^{\psi_{-n}}(2^Q)  & \quad\text{otherwise}.
\end{cases}
$$

\end{theorem}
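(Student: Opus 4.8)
The plan is to evaluate the Dirichlet series $Z_n(s) = \sum_{c\ge 1}\mathcal{S}(n;c)\,c^{-s-1/2}$ by splitting the sum according to the parity of $c$, since the multiplier $\lambda(d,c)$ --- and hence $\mathcal{S}(n;c)$ --- is given by genuinely different formulas in the two cases. In each case the cube collapses: using $\left(\frac{d}{c}\right)^3 = \left(\frac{d}{c}\right)$ one finds $\overline{\lambda(d,c)}^3 = e^{3\pi i c/4}\left(\frac{d}{c}\right)$ for odd $c$ (even $d$) and $\overline{\lambda(d,c)}^3 = e^{-3\pi i(d-1)/4}\left(\frac{c}{d}\right)$ for even $c$ (odd $d$), so that $\mathcal{S}(n;c)$ reduces to a twisted quadratic Gauss sum in both regimes. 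I would show that $Z_n^{odd}(s)$ records exactly the contribution of odd $c$, and that the even-$c$ contribution is a fixed explicit multiple of it; the factor $R_n(s)$ is then this ratio plus the $1$ coming from the odd part.

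For odd $c$, only even $d$ contribute, and writing $d = 2e$ gives $\mathcal{S}(n;c) = e^{3\pi i c/4}\left(\frac{2}{c}\right)\sum_{e \bmod c}\left(\frac{e}{c}\right)e^{2\pi i e n/c}$, a generalized quadratic Gauss sum. I would evaluate it by the classical Gauss sum formula --- the resulting $\sqrt{c}$ combines with $c^{-s-1/2}$ to leave $c^{-s}$, and the phase $\epsilon_c$ recombines with $e^{3\pi i c/4}$ into the global factor $e^{3\pi i/4}$ --- and then handle the common factors arising from the square part of $n = f^2 d$. This produces a Dirichlet series supported on odd $c$ whose Euler product over odd primes is $\prod_{p\text{ odd}}(1-p^{-2s})(1-\psi_{-n}(p)p^{-s})^{-1}$, the square/ramified contributions assembling into $w^{1-2s}T_s^{\psi_{-n}}(w)$, and the $1/\zeta(2s)$ appearing because the Gauss sum degenerates on the part of $c$ whose square divides $c$ but is coprime to $n$. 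Multiplying by the inverse of the missing $p=2$ factor of $L(s,\psi_{-n})/\zeta(2s)$, namely $\frac{1-\psi_{-n}(2)2^{-s}}{1-2^{-2s}}$, presents the answer exactly as $Z_n^{odd}(s)$.

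For even $c$, only odd $d$ contribute and $\overline{\lambda(d,c)}^3 = e^{-3\pi i(d-1)/4}\left(\frac{c}{d}\right)$. Writing $c = 2^a m$ with $m$ odd and using quadratic reciprocity to flip $\left(\frac{c}{d}\right)$, I would factor $\mathcal{S}(n;c)$ into a purely $2$-adic sum in $2^a$ times an odd-modulus Gauss sum of the same shape as in the odd-$c$ case. Summing the odd-modulus part reproduces $Z_n^{odd}(s)$, while summing the $2$-power part yields an explicit function of $2^{-s}$; collecting the $1$ from the odd-$c$ sum with this geometric-type contribution gives $R_n(s) = 1 + 2^{-s} - 2^{1-s}R_n^*(s)$. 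The case split tracks whether $n \equiv 1,2 \pmod 4$, where the $2$-adic valuation $q$ of $f$ vanishes and the $2$-power sum truncates so that $R_n^*(s)=0$, versus the remaining cases, where $q$ survives and the divisor structure of the $2$-power sum produces $2^{Q(1-2s)}T_s^{\psi_{-n}}(2^Q)$ together with the normalizing factor $\frac{1-2^{-2s}}{1-\psi_{-n}(2)2^{-s}}$.

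The main obstacle is the even-$c$ evaluation. The cubed theta multiplier contributes the eighth root of unity $e^{-3\pi i(d-1)/4}$, and combining this character of $d$ with the reciprocity flip of $\left(\frac{c}{d}\right)$ and the phase $e^{\pi i d n/c}$ modulo powers of $2$ is delicate. Extracting the exact $2$-adic factor, with its dependence on $n \bmod 4$ (equivalently on $d \bmod 4$) and on the valuation $q$, and verifying that the odd-modulus part detaches cleanly so that it recombines into $Z_n^{odd}(s)$, is where the careful bookkeeping lies; by comparison the odd-$c$ analysis is a routine, if somewhat lengthy, Gauss sum computation.
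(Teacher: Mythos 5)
Your proposal is correct and follows essentially the same route as the paper's proof: split $Z_n(s)$ by the parity of $c$, factor the even-$c$ contribution into a $2$-adic factor times the odd-$c$ Dirichlet series, evaluate that odd series as an Euler product giving $\frac{L(s,\psi_{-n})}{\zeta(2s)}w^{1-2s}T_s^{\psi_{-n}}(w)$ times the $p=2$ correction, and assemble the $2$-adic contribution into $R_n(s)$ with the case split on $n \bmod 4$. The only difference is one of bookkeeping versus citation: the paper does not perform the Gauss-sum evaluations itself but rewrites $\mathcal{S}(n;c)$ in terms of Zagier's sums $\gamma_c(-n)$ and quotes \cite{Za1} for exactly the three facts you propose to prove from scratch --- the multiplicativity $\gamma_{2^rc'}(N)=Q_r(N)\gamma_{c'}(N)$, the closed form of the odd Dirichlet series, and the evaluation of the $2$-power sum $\widetilde{R}_N(s)$ --- so the ``delicate bookkeeping'' you rightly flag in the even-$c$ case is precisely the content of those cited results.
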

\begin{proof}
We first relate our functions to certain functions studied by Zagier \cite{Za1}.
For this define for $n\in\Z$
\[
\gamma_c(n):=\frac{1}{\sqrt{c}}\sum_{d=1}^{2c}\lambda_Z(d, c)\,e^{-\frac{\pi idn}{c}},
\]
where
\[
\lambda_Z(d, c):=
\begin{cases}
i^{\frac{1-c}{2}}\left(\frac{d}{c}\right) & \quad\text{if } c \text{ is odd, } d\text{ is even},\\
i^{\frac{d}{2}}\left(\frac{c}{d}\right)   & \quad\text{if } c \text{ is even, } d\text{ is odd},\\
0                                                              & \quad\text{otherwise.}
\end{cases}
\]
It is not hard to see that
\[
\overline{\lambda^{3}(d, c)}=e^{\frac{3\pi i}{4}}(-1)^{c+1} \lambda_Z (d, c)
\]
yielding
\[
S(n; c)=e^{\frac{3\pi i}{4}} (-1)^{c+1} \sqrt{c}\gamma_c(-n).
\]
We next split $Z_n(s)$ into an even and into an odd part of $c$.
For this, we write $c=2^r c'$ with $c'$ odd, $r\in\N$ and by \cite{Za1} for $r \geq1$ and  $N \not=0$  we may decompose $\gamma_c(N)$ as
\[
\gamma_c(N)=Q_r(N)\gamma_{c'}(N),
\]
where
\[
Q_r(N):=
\begin{cases}
2^{\frac{r}{2}}(-1)^{\frac{m-1}{4}}      & \quad\text{if } r \text{ is even, } N=2^{r-2}m,\, m\equiv 1\pmod{4},\\
2^{\frac{r-1}{2}}(-1)^{\frac{m(m-1)}{2}}  & \quad\text{if } r \text{ is odd, } N=2^{r-1}m,\\
0                                                              & \quad\text{otherwise.}
\end{cases}
\]
This gives
\[
Z_n(s)=e^{\frac{3\pi i}{4}}\sum_{c=1}^\infty\frac{(-1)^{c+1} \gamma_c(-n)}{c^s}=e^{\frac{3\pi i}{4}}\sum\limits_{c'=1\atop{c'\text{ odd}}}^\infty \frac{\gamma_{c'}(-n)}{c'^s}\left(1-\sum_{r=1}^\infty\frac{Q_r(-n)}{2^{rs}}\right)\ .
\]
By \cite{Za1}, we know that
\begin{equation*}
\sum\limits_{c'=1\atop{c'\text{ odd}}}^\infty\frac{\gamma_{c'}(-n)}{c'^s}
 =\prod_{p\neq 2}\frac{1-p^{-2s}}{1-\psi_{-n}(p)p^{-s}}w^{1-2s}T_s^{\psi_{-n}}(w)
 = \frac{1-\psi_{-n}(2)2^{-s}}{1-2^{-2s}}\frac{L(s, \psi_{-n})}{\zeta(2s)}w^{1-2s}T_s^{\psi_{-n}}(w).
\end{equation*}
To evaluate the second factor, we  define
\[
\widetilde{R}_N(s):=\frac{1}{2}\left(1+\sum_{r=1}^\infty\frac{Q_r(N)}{(2^{r-1})^s}\right)\ .
\]
In \cite{Za1} it is shown that
\[
\widetilde{R}_N(s)=
\begin{cases}
0 & \quad\text{if } N\equiv 2, 3\pmod{4},\\
\frac{1-2^{-2s}}{1-\psi_N(2)2^{-s}}2^{Q(1-2s)}T_s^{\psi_N}(2^Q)  & \quad\text{if } N=F^2D.
\end{cases}
\]
Here $D$ is the discriminant of $\Q(\sqrt{N})$ and we write $F=2^Qr$ with $r$ odd.
Now the claim follows from
\[
1-\sum_{r=1}^\infty\frac{Q_r(-n)}{2^{rs}}=
-2^{1-s} \widetilde{R}_{-n}(s)+2^{-s}+1.
\]
\end{proof}
To finish the evaluation of $Z_n(1)$, we distinguish whether $-n$ is a  square or not. If $-n$ is not a square,
 $L(s, \psi_{n})$ converges and we may deduce that $Z_n(s)$ converges for $s=1$.
 We may then evaluate $Z_n(s)$ at $s=1$
by using Theorem \ref{thm:EnEvenOdd}.
If $-n$ is a square, then $L(s,\psi_{-n})= \zeta(s)$ and we have
by Theorem \ref{thm:EnEvenOdd} that
$$
Z_n(1) = \frac{e^{\frac{3 \pi i}{4}}}{\zeta(2)}  \frac{T_1^{\psi_1}(w)}{w}
\lim_{s \to1} \frac{\zeta(s)R_n(s)}{\left(1+2^{-s}\right)} .
$$
One easily computes
$$
 \frac{R_n(s)}{\left(1+2^{-s}\right)}=1-2^{(1-s)}.
$$
Using that $\zeta(s)= \frac{1}{s-1}+O(1)$ as $s\to 1$, gives that
$$
\lim_{s \to1} \frac{\zeta(s)R_n(s)}{\left(1+2^{-s}\right)}
= \frac{d}{ds} \left. \left(1-2^{(1-s)} \right) \right|_{s=1}
= \log(2).
$$
From this we may conclude that $Z_n(1)=Z_n$.

\subsection{Computation of $Z_0(1)$} \label{n=0}

This subsection is devoted to the computation of $Z_0(s)$. 

\begin{theorem}\label{thm_E0} We have for $s>1$
\[
Z_0(s)= e^{\frac{3\pi i }{4}} \frac{\zeta(2s-1)}{\zeta(2s)}
\frac{1-2^{-(2s-1)}-2^{-s}}{1-2^{-2s}}.
\]
In particular $Z_0(s)$ has an analytic continuation to $s=1$.
\end{theorem}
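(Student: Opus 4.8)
The plan is to evaluate the Dirichlet series $Z_0(s)=\sum_{c\ge 1}\mathcal{S}(0;c)\,c^{-s-\frac12}$ directly, exploiting that for $n=0$ the exponential $e^{\pi i dn/c}$ is trivial, so each $\mathcal{S}(0;c)$ collapses to a pure Jacobi-symbol character sum. I would split the sum according to the parity of $c$ and evaluate the two parts separately. This is genuinely necessary here, because Zagier's multiplicativity relation $\gamma_c(N)=Q_r(N)\gamma_{c'}(N)$ used in the proof of Theorem~\ref{thm:EnEvenOdd} is only available for $N\neq 0$; for $N=0$ the weight factors $Q_r$ are undefined, so the even part must be computed from scratch. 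The even part is exactly what produces the extra $-2^{-s}$ term relative to the naive answer coming from odd $c$.

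For odd $c$ the relevant values are $\overline{\lambda(d,c)}^{3}=e^{\frac{3\pi i c}{4}}\left(\frac{d}{c}\right)$ (nonzero only for even $d$), so after the substitution $d=2e$ one gets $\mathcal{S}(0;c)=e^{\frac{3\pi i c}{4}}\left(\frac{2}{c}\right)\sum_{e=1}^{c}\left(\frac{e}{c}\right)$. The inner sum is a complete character sum for the Jacobi symbol modulo $c$, hence vanishes unless $c$ is a perfect square and equals $\phi(c)$ when $c=m^2$. Since $m$ odd forces $m^2\equiv 1 \pmod 8$, the phase reduces to $e^{\frac{3\pi i}{4}}$ and $\mathcal{S}(0;m^2)=e^{\frac{3\pi i}{4}}\phi(m^2)$. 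Using $\phi(m^2)=m\phi(m)$, the odd-$c$ contribution becomes $e^{\frac{3\pi i}{4}}\sum_{m\text{ odd}}\phi(m)m^{-2s}$, which I would evaluate by recalling $\sum_{m\ge1}\phi(m)m^{-t}=\zeta(t-1)/\zeta(t)$ and removing the Euler factor at $2$; with $t=2s$ this yields $e^{\frac{3\pi i}{4}}\frac{\zeta(2s-1)}{\zeta(2s)}\frac{1-2^{-(2s-1)}}{1-2^{-2s}}$.

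For even $c$ I would write $c=2^{r}c'$ with $c'$ odd and $r\ge 1$, and use $\overline{\lambda(d,c)}^{3}=i^{(d-1)/2}\left(\frac{c}{d}\right)$ (nonzero only for odd $d$). Writing $\left(\frac{c}{d}\right)=\left(\frac{2}{d}\right)^{r}\left(\frac{c'}{d}\right)$ and applying quadratic reciprocity to $\left(\frac{c'}{d}\right)$ peels off a purely $2$-adic phase (whose reciprocity sign is trivial in the only surviving case, $c'$ an odd square), so CRT factors the sum as $\mathcal{S}(0;c)=A_r\cdot B_{c'}$, where $B_{c'}=\sum_{b \bmod c'}\left(\frac{b}{c'}\right)$ vanishes unless $c'=w^2$ is an odd square (then $B_{c'}=\phi(w^2)$) and $A_r$ is a finite sum of fourth roots of unity over odd residues modulo $2^{r+1}$. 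The crux is evaluating $A_r$: for $r$ even the four phases $1,i,-1,-i$ occur equally often and cancel, giving $A_r=0$, whereas for $r$ odd one finds $A_r=2^{r-1}(1-i)$. Hence only $c=2^{r}w^2$ with $r,w$ odd contribute, and the even part assembles into $\frac{1-i}{2}\bigl(\sum_{w\text{ odd}}\phi(w)w^{-2s}\bigr)\bigl(\sum_{r\text{ odd}}2^{r(\frac12-s)}\bigr)$; here I would simplify the prefactor via $\frac{1-i}{2}\,2^{\frac12}=e^{-\frac{\pi i}{4}}=-e^{\frac{3\pi i}{4}}$ and sum the geometric series to $\frac{2^{\frac12-s}}{1-2^{1-2s}}$.

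Finally I would add the two contributions. In the even part the numerator factor $1-2^{-(2s-1)}$ is identical to the geometric-series denominator $1-2^{1-2s}$ and cancels, reducing the even part to $-e^{\frac{3\pi i}{4}}2^{-s}\frac{\zeta(2s-1)}{\zeta(2s)}\frac{1}{1-2^{-2s}}$; summing with the odd part merges the two numerators into $1-2^{-(2s-1)}-2^{-s}$, giving exactly $e^{\frac{3\pi i}{4}}\frac{\zeta(2s-1)}{\zeta(2s)}\frac{1-2^{-(2s-1)}-2^{-s}}{1-2^{-2s}}$, as claimed. For the analytic continuation I would note that $\zeta(2s-1)$ has only a simple pole at $s=1$, while the numerator factor $1-2^{-(2s-1)}-2^{-s}$ vanishes there (it equals $1-\tfrac12-\tfrac12=0$) and $\zeta(2s)$, $1-2^{-2s}$ are nonzero at $s=1$; the zero cancels the pole, so $Z_0(s)$ extends holomorphically across $s=1$. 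I expect the even-$c$ analysis, specifically the reciprocity/CRT factorization and the evaluation of the phase sum $A_r$ together with its vanishing for even $r$, to be the main obstacle; everything else is bookkeeping with standard Dirichlet-series identities.
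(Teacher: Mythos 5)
Your proof is correct and follows essentially the same route as the paper: split $Z_0(s)$ by the parity of $c$, reduce $\mathcal{S}(0;c)$ to Jacobi-symbol character sums (with the even case factoring into a $2$-adic phase sum times an odd character sum that survives only when $c'$ is a square), then assemble via $\sum_m \phi(m)m^{-t}=\zeta(t-1)/\zeta(t)$ and a geometric series over odd $r$. The only cosmetic differences are that the paper kills the even-$r$ phase sum by the shift $d_1\mapsto d_1+2$ and leaves the reciprocity issue implicit, whereas you use equidistribution of fourth roots of unity and invoke quadratic reciprocity explicitly.
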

\begin{proof}
We first assume that $c$ is odd. Then
\[
S(0; c)=\sum\limits_{d\pmod{2c}\atop{d\text{ even}}}\overline{\lambda(d, c)}^3=e^{\frac{3\pi ic}{4}}\sum\limits_{d\pmod{2c}\atop{d\text{ even}}}
\left(\frac{d}{c}\right)=\left(\frac{2}{c}\right)e^{\frac{3\pi ic}{4}}\sum_{d\pmod{c}}\left(\frac{d}{c}\right)\ .
\]
The last sum vanishes unless $c$ is  a square in which case it equals $\phi(c)$, thus in this case
\[
S(0; c)=e^{\frac{3\pi i}{4}} \phi(c).
\]
Next we assume that $c$ is even. Then
\[
S(0; c)=\sum\limits_{d\pmod{2c}\atop{d\text{ odd}}}\overline{\lambda(d, c)}^3=\sum\limits_{d\pmod{2c}\atop{d\text{ odd}}}e^{\frac{\pi i(d-1)}{4}}\left(\frac{c}{d}\right).
\]

\noindent
We write $c=2^rc'$ with $r\geq 1, c' $ odd,  and $d=d_1+2^{r+1}d_2$, where $d_1$ runs   $\pmod{2^{r+1}}$ and $d_2$ runs $\pmod{c'}$.
 Then
\[
e^{\frac{\pi i}{4}(d-1)}\left(\frac{c}{d}\right)=e^{\frac{\pi i}{4}(d_1-1)}\left(\frac{2}{d_1}\right)^r\left(\frac{c'}{d_1+2^{r+1}d_2}\right),
\]
therefore
\[
S(0; c)=\sum\limits_{d_1\pmod{2^{r+1}} }e^{\frac{\pi i}{4}(d_1-1)}\left(\frac{2}{d_1}\right)^r\sum_{d_2\pmod{c'}}
\left(\frac{c'}{d_1+2^{r+1}d_2}\right)\ .
\]
The sum on $d_2$ can be simplified as
\[
\sum_{d_2\pmod{c'}}
\left(\frac{c'}{d_2}\right)=
\begin{cases}
\phi(c')        & \quad\text{if } c' \text{ is a square},\\
0                       & \quad\text{otherwise.}
\end{cases}
\]
We next consider the sum on $d_1$. Changing $d_1\mapsto d_1+2$ one sees that this sum vanishes if $r$ is even. If $r$ is odd, then the sum easily evaluates to $-2^{r-\frac{1}{2}} e^{\frac{3\pi i}{4}}$.
%
%
We deduce that
\[
\mathcal{S}(0;c)=\begin{cases}
    e^{\frac{3\pi i
}{4}} \phi(c) & \hbox{if $r=0$ and $c'$ is a square}, \\
 -2^{r-\frac12}e^{\frac{3\pi i}{4}}\phi(c') & \hbox{if $c'$ is a square and } r \hbox{ is odd},\\
    0 & \hbox{otherwise}.
\end{cases}
\]
Combining the above gives that
$$
Z_0(s) =
\sum_{c \text{ odd}} \frac{\cS(0;c)}{c^{s+\frac{1}{2}}}
+ \sum_{r\ge 1} \sum_{c \text{ odd}} \frac{\cS\left(0;2^r c\right)}{(2^rc)^{s+\frac{1}{2}}}
=  e^{\frac{3\pi i}{4}} \sum_{ c \text{ odd}  }
\frac{\phi\left(c^2\right)}{c^{2s+1}} \(1- \frac{1}{2^{\frac12}} \sum_{r \text{ odd} } 2^{r\left(\frac{1}{2}-s\right)}\).
$$
Using $\frac{\zeta(2s-1)}{\zeta(2s)} = \sum_{c\ge 1} \frac{\phi(c^2)}{c^{2s+1}}$ and geometric summation we conclude the theorem.
\end{proof}
We let $s\to 1$ to obtain the evaluation of $Z_n(1)$.  By Theorem \ref{thm_E0} we have that
$$
Z_0(1) = \frac{4 e^{\frac{3 \pi i}{4}}}{3 \zeta(2)}
\lim_{s \to 1} \left(\zeta(2s-1) \left(1-2^{-(2s-1)} -2^{-s} \right) \right).
$$
Using that $\zeta(2s-1) = \frac{1}{2s-1} + O(1)$ as $s\to 1$, we obtain that
\[
\lim_{s \to 1}\zeta(2s-1)(1-2^{-(2s-1)}-2^{-s})=\frac{3}{4} \log(2).
\]
This  easily gives that $Z_0(1)=Z_0$.

\section{Relationship to other works}\label{sec:DIT}
Zagier \cite{Za2} (see also \cite{HZ}) showed that
  the generating function for the Hurwitz class numbers, namely
 $$
 \cH(\tau)
 := -\frac{1}{12} + \sum_{\begin{subarray}{c} n\ge 1 \\ n\equiv 0, 3\pmod{4} \end{subarray}} H(-n)q^n,
 $$
 is a mock modular form with shadow $\Theta$.
Recently, Duke, Imamo\=glu, and T\'oth  \cite{DIT} constructed a generalized mock modular form whose shadow is
the harmonic weak Maass form obtained by completing
$\cH(\tau)$ with a term similar to $P_g$ of the introduction.
One may use the function constructed in \cite{DIT} with the relations between
$L(\psi_{-n}, s)$, $r(n)$,  and the Hurwitz class numbers to give a different
construction of the form $F_{\Theta}$.
Their work does not include the explicit evaluation of the Fourier coefficients for square $n$, however one can use the
calculations here to compute those terms.

Much of the arithmetic of classical holomorphic
modular forms relies on the theory of complex multiplication and
the fact that the coefficients of modular forms are associated to the value of a modular function at points
determined by data associated to  an
imaginary quadratic fields.
The work of Duke, Imamo\=glu, and T\'oth \cite{DIT, DIT2}
demonstrates a similar phenomenon linking the coefficients of mock modular
forms and real quadratic fields.  Namely, they show that the coefficients of a family of mock modular forms
are associated to the values of modular functions at points corresponding to data from real quadratic fields.
Our result may be viewed as an additional example of this phenomenon.


Work of Bruinier and Ono \cite{BrO},
demonstrates the relationship between harmonic Maass forms and special values of derivatives of $L$-functions.
As in our work,
their work concerns twists by Dirichlet characters associated to both real and imaginary quadratic fields.
That work, as well as the related works of Bruinier, Kudla, and Yang \cite{BrKY, BrY1, BrY2}
yield deep connections between weak Maass forms  and
the theorems of Waldspurger,  Borcherds, and Gross-Zagier.

\end{document}